%
%
%
%
\documentclass{amsart}

\usepackage{amsmath} 
\usepackage{amsfonts} 
\usepackage{amssymb} 
\usepackage{epsfig} 
\usepackage{graphicx} 
\usepackage{multirow} 
\usepackage{verbatim} 
\usepackage{rotating} 
\usepackage[all]{xy}


\newtheorem{theorem}{Theorem}[section]
\newtheorem{lemma}[theorem]{Lemma}

\theoremstyle{definition}
\newtheorem{definition}[theorem]{Definition}

\theoremstyle{remark}
\newtheorem{remark}[theorem]{Remark}
\theoremstyle{notation}
\newtheorem{notation}[theorem]{Notation}
\numberwithin{equation}{section}
\theoremstyle{corollary}
\newtheorem{corollary}[theorem]{Corollary}

\usepackage[bookmarks=false]{hyperref}
\newcommand{\Map}{\mathbf{Map}}
\newcommand{\map}{\mathbf{map}}

\newcommand{\sSet}{\mathbf{sSet}}

\newcommand{\Cat}{\mathbf{Cat}}

\newcommand{\Top}{\mathbf{Top}}

\newcommand{\C}{\mathbf{C}}

\newcommand{\M}{\mathbf{M}}

\newcommand{\Path}{\mathcal{P}}

\newcommand{\V}{\mathbf{V}}

\newcommand{\N}{\mathrm{N}_{\bullet}}

\newcommand{\Ho}{\mathrm{Ho}}



\begin{document}

\title{Homotopy Theory of  $T$-Algebras over $\Cat_{\Top}$ ?}

\author{Amrani Ilias}
\address{Department of Mathematics\\ Masaryk University\\ Czech Republic}
\email{ilias.amranifedotov@gmail.com}
\email{amrani@math.muni.cz}
\thanks{Supported by the project CZ.1.07/2.3.00/20.0003
of the Operational Programme Education for Competitiveness of the Ministry
of Education, Youth and Sports of the Czech Republic.
}

\thanks{}

\subjclass[2000]{Primary 18, 55}



\keywords{Model Categories, $\infty$-monoidal categories, $T$-algebras}

\begin{abstract}
In this article, we interconnect two different aspects of higher category theory, in one hand the theory of $\infty$-categories and on an other hand the theory of $2$-categories.We construct an explicit functorial path objet in the model category of topological categories. We discuss some properties and consequences of such path object. We also explain the construction of a 2-monad which algebras are (symmetric) monoidal topological categories. Finally, we explain the relationship with the eventual model structure 
on the category of $T$-algebras.   
\end{abstract}

\maketitle

\section*{Motivation}
This work was motivated by the following question: \textsf{is there an induced model structure on the category of (symmetric) monoidal topological (simplicial) categories, or more generally a model structure on the category of $T$-algebras, where $T$ is some flexible (i.e., cofibrant) 2-monad \cite{lack2007}}? Trying to answer this question, we end up with a very simple (but stronger) question: \textsf{Is there a functorial path object in the model  category of topological categories having the property to be cartesian i.e.,  commutes with the cartesian product}? The existence of such functorial categorical path object and a functorial cartesian factorization of the diagonal implies the existence of an induced model structure on the category of  $T$-algebras  for some flexible 2-monads i.e., 2-operads \ref{2-operad}. This point of view is maybe naive, but as experience shows, these questions are the key point of the homotopy theory of $\mathcal{O}$-algebras, where $\mathcal{O}$ is a symmetric (cofibrant) operad over the category of spaces or simplicial sets  \cite{berger2003axiomatic}. As far as I understand, Lurie's books are motived by two essential aspects, the first one is homotopy theory of quasi-categories \cite{lurie2009higher} and the second one is the homotopy theory of quasi-categoires with some algebraic stucture (e.g. $\infty$-symmetric monoidal categories) codified by a notion of $\infty$-operads \cite{lurie2011higher}. Our contribution is very modest, we explain the construction of some particular 2-operads. Then we construct a functorial path objects in the category of topological categories but which fails to be a cartesian functor.
We carefully treated all the details. So the previous questions are still open. In section \ref{monad}, we introduce the theory of 2-operads. In section \ref{moorepath}, we construct an explicit path object in the category $\Cat_{\Top}$, which gives a new easy proof of the fact that $\Cat_{\Top}$ is a cofibrantly generated model category Quillen equivalent to $\Cat_{\sSet}$. In the last section \ref{eventual}, we discuss some properties and consequences of our functorial categorical path object. We end with some observation about the existence of an eventual model structure on $T$-algebras over the category of topological categories.

   \textbf{Acknowledgment: }
 I would like to thank Prof. Jiri Rosicky to pointing out the existence of the category $\Top_{\Delta}$ and John Bourke for his explanations about the 2-categorical world. I'm grateful to Prof. Steve Lack for all his precisions about  the theory of 2-monads. I'm also grateful to Alexandru Stanculescu and Lukas Vokrinek for their comments during the category theory seminar at Masaryk University. Finally, I would like to thank J. Lurie for answering my naive questions. 


\section{2-Monads, 2-Operads}\label{monad}
This part is a brief introduction to the theory of 2-monads and their algebras in the setting of 2-categories. The main references for 2-monads are \cite{kelly1993}, \cite{lack2007} and \cite{lack2010}. The Questions in mind is the following: \textit{Can we describe the (symmetric) monoidal enriched categories as algebras over a 2-monad?} It is quite clear that such algebraic structure can't be described with the theory of operads.  Let $\C$ be a locally presentable 2-category \cite{adamek1994locally} and $\C_{\lambda}$ the small 2-subcategory of $\lambda$-presentable objects. The category of $\lambda$-ranked endofunctors $\mathrm{End}_{\lambda}(\C)$  form a locally presentable monoidal 2-category. The category of monoids in $\mathrm{End}_{\lambda}(\C)$ is denoted by $\mathrm{Mnd}_{\lambda}(\C)$, which is the
2-category of $\lambda$-ranked 2-monads. In \cite{monadicity}, Lack proved that $\mathrm{Mnd}_{\lambda}(\C)$ is actually monadic i.e., we have an adjunction:
$$  \xymatrix{\mathrm{End}_{\lambda}(\C) \ar@<2pt>[r]^{ \mathbb{H}} & \mathrm{Mnd}_{\lambda}(\C)  \ar@<2pt>[l]^{forget}}$$ 
and the objects of the category $\mathrm{Mnd}_{\lambda}(\C)$ are precisely algebras over the monad generated by the previous adjunction. The functor $\mathbb{H}$ is the functor which associate to each $\lambda$-ranked endofunctor a  $\lambda$-ranked free monad. Recall that this result is far to be obvious because the category $\mathrm{End}_{\lambda}(\C)$ is monoidal but \textit{not} symmetric monoidal category.\\
The previous free functor allow us to construct a presentation for a given monads using weighted colimites (\textit{coequifers, co(iso)inserters}), for example
in the case of $\C=\Cat$, Lack \cite{lack2007}, gave a general presentation of monads on $\Cat$ which algebras are monoidal categories.\\ 
Our main interest is the enriched case. The category $\V=\sSet$ or $\Top_{\Delta}$ is locally presentable \cite{fajstrup2008convenient}.  By \cite{kelly2001v} we have that $\Cat_{\V}$ is a locally presentable. Thus we get the 2-adjunction 
$$ 
 \xymatrix{\mathrm{End}_{\lambda}(\Cat_{\V}) \ar@<2pt>[r]^{ \mathbb{H}} & \mathrm{Mnd}_{\lambda}(\Cat_{\V})  \ar@<2pt>[l]^{forget}}$$
There exists a monad for which algebras are monoidal (symmetric) simplicial categories. This construction follows form the 2-categorical structure on $\mathrm{End}_{\lambda}(\Cat_{\V}) $ ($\Cat$-weighted colimits).\\

\begin{definition}\label{T-algebras}
Given a monad $(T,\mu,\eta)$ on a category $\C$, a $T$-algebra $X$ is an object in $\C$ with a map 
$h:TX\rightarrow X$ subject to the following commutative diagrams:
$$
 \xymatrix{
 T^{2} X\ar[d]^{\mu_{X}}\ar[r]^{Th} & TX\ar[d]_{h} & & X\ar[r]^{\eta_{X}}\ar[dr]^{id}&TX\ar[d]^{h}\\
 TX \ar[r]^{h} & X. & & & X.
 }
 $$

\end{definition}
\begin{definition}\label{morphismT-algebras}
A morphism of $T$-algebras $h: TX\rightarrow X$ and  $g: TY\rightarrow Y$
 is a map $f:X\rightarrow Y$  in $\C$ subject to the following commutative 
diagram:
$$
\xymatrix{
 TX\ar[d]^{Tf}\ar[r]^{h} &  X\ar[d]_{f} \\
 TY \ar[r]^{g} & Y. 
 }
 $$

\end{definition}

\subsection{Presentation of a 2-monad}
We will explain the construction of a 2-monad $S$ on $\Cat_{\V}$ such that the $S$-algebras are exactly the monoidal symmetric enriched $\V$-categories and morphisms are the strict monoidal enriched functors.\\

 \begin{notation}\label{exmonads}
We fixe the following notations for some particular 2-monads over the 2-category $\Cat_{\V}$.
\begin{enumerate}
\item The $M$-algebras are enriched  monoidal categories (without unit)  and morphisms are strict monoidal functors. We denote denote the category of $M$-algebras by  $\Cat_{\V}^{M}$.
\item The $S$-algebras are enriched symmetric monoidal categories (without unit) and morphisms are strict monoidal functors. We denote denote the category of $S$-algebras by  $\Cat_{\V}^{S}$.
\end{enumerate}
\end{notation}

Before building an explicit presentation for 2-monads we explain some mechanism developed in \cite{lack2007}. For our propose $\M$ is $\lambda$-locally presentable 2-category. For any two objects $A$ and $B$ ones define the corresponding $\lambda$-ranked endofunctor $<A,B> ~\in \mathrm{End}_{\lambda}(\M)$ which satisfies the following isomorphism of categories (i.e., enriched homs) : 
\begin{equation}\label{adjunction1} 
\underline{\mathrm{End}_{\lambda}(\M)}(T,<A,B>)\simeq \underline{\M}(TA,B).
\end{equation}
The endofunctor $<A,A>$ has a structure of a monad. If $T$ is a monad, then a map of monads 
$T\rightarrow <A,A>$ corresponds a $T$-algebra structure on $A$ i.e., a map $TA\rightarrow A$ satisfying the commutative diagram defined in \ref{T-algebras}. In order to get a little intuition how to present a 2-monad, we start with an example. Suppose that $U^{n}: \M\rightarrow \M$ be a 2-functor which sends 
$A$ to $A^{\times^{n}}$, where $\M=\Cat$ or $ \Cat_{\V}$. A nature question arises: \textit{what are the algebras over the free monad $\mathbb{H}U^{n}$?} By definition an $\mathbb{H}U^{n}$-algebra structure on $A$ is a map
of monads $\mathbb{H}U^{n}\rightarrow<A,A>$ which is the same thing as giving a map $U^{n}\rightarrow <A,A>$ in $\mathrm{End}_{\lambda}(\M)$, and by the isomorphism given in \ref{adjunction1}, it is the same thing as defining a morphism $U^{n}A\rightarrow A$ in $\M$. We conclude that a $\mathbb{H}U^{n}$-algebra $C$ is the same thing as giving a n-array operation in $C$ i.e., a map $f: ~C^{\times^{n}}\rightarrow C$.

 \begin{remark}\label{representablefunctor1}
 Suppose we have two 2-monads $L$ and $R$ on $\Cat_{\V}$, then we have a one to one bijection between diagrams of the form:
  $$
  \xymatrix{
 \Cat_{\V}^{R} = R-\mathbf{Alg}\ar[rd]_-{forget}\ar[rr]^-{f} & & L-\mathbf{Alg}=\Cat_{\V}^{L} \ar[ld]^-{forget} \\
  &\Cat_{\V}.  
  }
  $$
 and morphisms of 2-monads $\mathrm{Mnd}_{f}(\Cat_{\V})(L,R)$.
 
 \end{remark}
 
Consider the free monads $\mathbb{H}U^{2}$ and $\mathbb{H}U^{3}$ generated by $U^{2}$ and $U^{3}$. Take the co-isoinsertrer in $\mathrm{Mnd}_{f}(\Cat_{\V})$ of the two maps $f, g: \mathbb{H}U^{3}\rightarrow \mathbb{H}U^{2}$ which are defined as follow: each $\mathbb{H}U^{2}$-algebra has two  natural $\mathbb{H}U^{3}$-algebra structures \ref{representablefunctor1}. The co-isoinsertrer of the maps $f$ and $g$ is a map $r: \mathbb{H}U^{2}\rightarrow C$ such that we have a monad isomorphism $\rho: rf\simeq rg.$ An $C-$algebra $\C$ is now an enriched category with a functor $\otimes: \C\times\C\rightarrow \C,$ and a natural isomorphism $\alpha : (a\otimes b)\otimes c\simeq a\otimes (b\otimes c).$

Now we consider the 2-functor $U^{4}$. There are two $\mathbb{H}U^{4}-$algebra structures on a $C$-algebra, related to the derived operations 
$((a\otimes b)\otimes c)\otimes d$ and $a\otimes ((b\otimes c)\otimes d)$ which induce two monads maps
$f', g': \mathbb{H}U^{4}\rightarrow C$ \ref{representablefunctor1}. The two isomorphisms $((a\otimes b)\otimes c)\otimes d\simeq a\otimes ((b\otimes c)\otimes d)$ which can be build via $\alpha$ induce two monad transformations $\phi, \psi: f^{'} \rightarrow g'$, and we can now form the coequifier of the two cells, namely the universal monad map $q:C\rightarrow M$ for which $q\psi= q\phi$. The $M-$algebras are exactly the monoidal enriched categories without unit object. For each $M-$algebra there is an opposite $M$-algebra structure. Now, we will build a 2-monad which algebras are exactly \textit{symmetric} monoidal enriched categories. Each monoidal enriched category $(\C, \otimes)$ has two monoidal structures, the original one and the opposite one $(\C, \otimes^{op})$. Thus, we have two maps of monads $i,j: ~\mathbb{H}U^{2}\rightarrow M$. We form a co-isoinserer $c:M\rightarrow S^{'}$. The $S^{'}$-algebras are monoidal categories such that we have a natural isomorphism $\sigma:~ a\otimes b\rightarrow b\otimes a$.  
For the symmetric monoidal structure, one of the axioms requires that $\sigma ^2=id$. So again any $S^{'}-$algebra $(\C, \otimes)$ has two $S^{'}$-algebra structures the original one and the one $(\C, \otimes^{\sigma^{2}})$ given by  symmetric structure $a\otimes^{\sigma^{2}}  b= \sigma^{2}(a\otimes b)$ i.e., twisting two times. It induce to morphism of monads
 $h,d: ~\mathbb{H}U^{2}\rightarrow S^{'}$. We form the coequifier   $p:S^{'}\rightarrow S^{''}$. An $S^{''}$-algebra $\C$ is a monoidal topological category with a natural twist $\sigma: a\otimes b\rightarrow b\otimes a$ such that $\sigma^{2}= id$.
 Finally using again the coequifier for $x,y:~\mathbb{H}U^{3}\rightarrow S^{''}$ which take in account the compatibility between the symmetry and the associative laws i.e., the following commutative diagram:
 $$
  \xymatrix{
   (a\otimes b) \otimes c\ar[d]\ar[r] & a\otimes (b \otimes c)\ar[r] & (b\otimes c) \otimes a\ar[d] \\
  (b\otimes a) \otimes c\ar[r] & b\otimes (a \otimes c)\ar[r] & b\otimes (c \otimes a).  
  }
  $$
  we end-up with the monad $S$ which algebras are exactly symmetric monoidal enriched categories (without unit object). 
 
 \begin{remark}
 The 2-monad which algebras are (symmetric) monoidal enriched unital categories is build by the same way by taking in account the operation of type 0. 
 \end{remark} 
We should remark that we have introduced a very special 2-monads which codify linear operations as the theory of operads in the classical sense \cite{berger2003axiomatic}. These 2-monads are build using only the free 2-monads of the form $\mathbb{H}U^{n}$.   
\begin{definition}\label{2-operad}
A 2-monad $T$ is called a 2-operad if it is build from free monads $\mathbb{H}U^{n}$ using  coproducts, coinserters and coequifiers.
\end{definition}

\section{Categorical Path Object}\label{moorepath}
Let $I=[0,1]$, and $X$ a topological space. It is well known that in the standard path object $X^{I}=\map(I,X)$  the concatenation of paths (when it is possible) is \textit{not} strictly associative. To solve this inconvenient problem we introduce the \textit{Moore path space}.
\begin{definition}\label{moorepathsapce}
Let $X$ be a topological space, and $\mathcal{M}(X)$ be the subspace of $ X^{\mathbb{R}^{+}}\times\mathbb{R}^{+}$ with the following property: 
$$\mathcal{M}(X)=\{(\alpha:\mathbb{R}^{+}\rightarrow X, r)\in X^{\mathbb{R}^{+}}\times\mathbb{R}^{+} | ~\alpha (s)=\alpha (r), ~\forall s\geqslant r\}.$$ 
The real number $r$ is called the length of the path.  
\end{definition}
There is a natural inclusion of $X^{I}=\mathcal{M}^{1}(X)\rightarrow \mathcal{M}(X)$ wich is the inclusion of path of length 1.
This inclusion is a weak equivalence. Moreover, we have two natural maps 
$(\delta_{-},\delta_{+}):  \mathcal{M}(X)\rightarrow X$ defined by $\delta_{-}(\alpha, r)= \alpha(0)$
and   $\delta_{+}(\alpha, r)= \alpha(r)$.
\begin{lemma}
The map $(\delta_{-},\delta_{+}):~ \mathcal{M}(X)\rightarrow X\times X$ is a fibration.
\end{lemma}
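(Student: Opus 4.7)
The plan is to show directly that $(\delta_-,\delta_+)$ satisfies the homotopy lifting property with respect to every test space, hence is a Hurewicz fibration (and in particular a Serre fibration for the Quillen model structure on $\Top$). The essential feature of the Moore path space that makes this possible is that the length parameter $r$ is free, so any Moore path can be extended at either end by a short path without disturbing its interior and without destroying strict associativity. This is precisely the reason for introducing $\mathcal{M}(X)$ instead of $X^{I}$.

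For the construction, fix a test space $Y$, a map $h\colon Y \to \mathcal{M}(X)$ written as $h(y) = (\alpha_y, r_y)$, and a homotopy $H = (H_-, H_+)\colon Y \times I \to X \times X$ with $H(y,0) = (\alpha_y(0), \alpha_y(r_y))$. I would define the lift $\tilde H(y,t) = (\beta_{y,t},\, r_y + 2t)$ by prepending and appending short pieces of $H$ of length $t$ at each end of $\alpha_y$:
\[
\beta_{y,t}(s) =
\begin{cases}
H_-(y,\, t - s), & 0 \le s \le t,\\
\alpha_y(s - t), & t \le s \le t + r_y,\\
H_+(y,\, s - t - r_y), & t + r_y \le s \le 2t + r_y,\\
H_+(y, t), & s \ge 2t + r_y.
\end{cases}
\]
At $t = 0$ this collapses to $(\alpha_y, r_y)$, and by construction $\delta_- \tilde H(y,t) = H_-(y,t)$ and $\delta_+ \tilde H(y,t) = H_+(y,t)$, so both triangles of the lifting square commute.

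The main obstacle is checking continuity: the formula is piecewise with breakpoints depending jointly on $y$, $t$, and $r_y$. The seams at $s = t$ and $s = t + r_y$ match because $H(y,0) = (\delta_-,\delta_+)h(y)$, and the seam at $s = 2t + r_y$ matches by the Moore condition $\alpha_y(s) = \alpha_y(r_y)$ for $s \ge r_y$ combined with $H_+(y,0) = \alpha_y(r_y)$. Joint continuity of $(y, t, s) \mapsto \beta_{y,t}(s)$ then follows from the gluing lemma applied to the four closed subsets of $Y \times I \times \mathbb{R}^+$ cut out by the breakpoint inequalities (each piece being a continuous composition), while continuity of the length $(y,t) \mapsto r_y + 2t$ is immediate. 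Passing to the adjoint yields the required continuous lift $\tilde H\colon Y \times I \to \mathcal{M}(X)$, and since the argument is performed on an arbitrary $Y$, it also proves the Serre fibration property needed for the model structure.
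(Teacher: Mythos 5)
Your proof is correct, but it takes a genuinely different route from the paper. The paper only verifies the right lifting property against the generating trivial cofibrations $I^{n}\rightarrow I^{n+1}$: it uses compactness of $I^{n}$ to bound the lengths of the Moore paths in the image, and then reduces to the known fibration $(ev_{0},ev_{1}):X^{I}\rightarrow X\times X$ after rescaling. You instead prove the homotopy lifting property against an \emph{arbitrary} test space $Y$ by writing down an explicit lift that grows the length parameter from $r_{y}$ to $r_{y}+2t$ and grafts the two components of the homotopy $H$ onto the ends of $\alpha_{y}$; the seam conditions you check (via $H(y,0)=(\delta_{-},\delta_{+})h(y)$ and the Moore stabilization condition) are exactly what is needed, and the gluing lemma on the four closed regions, together with the exponential adjunction for the locally compact $\mathbb{R}^{+}$, gives continuity. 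What your argument buys is the stronger conclusion that $(\delta_{-},\delta_{+})$ is a Hurewicz fibration (hence a fortiori a Serre fibration, which is all the model structure requires), with no appeal to compactness of the test object; this is the classical reason Moore path spaces are introduced, since the free length parameter is precisely what lets one extend a path at either end without reparametrizing its interior. The cost is the bookkeeping of breakpoints depending jointly on $y$ and $t$, which the paper's compactness reduction avoids at the price of a weaker and less explicit statement.
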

\begin{proof}
Let $I^{n}\rightarrow I^{n+1}$ be a generating trivial cofibration, and consider a commutative diagram: 
 $$
 \xymatrix{
 I^{n}\ar[d]^{}\ar[r]^-{f} & \mathcal{M}(X)\ar[d]^-{(\delta_{-},\delta_{+})}\\
I^{n+1} \ar[r] & X\times X.
 }
 $$ 
 Since $I^{n}$ is compact, there is some $r\in\mathbb{R}^{+}$ such that the image of $f$ lives in $\mathcal{M}^{r}(X)\subset\mathcal{M}(X) $. Consequently, we have to construct a lifting for the follwing diagram:
  
  $$
 \xymatrix{
 I^{n}\ar[d]^{}\ar[r]^-{f} & \mathcal{M}^{r}(X)\subset \mathcal{M}(X)\ar[d]^-{(ev_{0},ev_{r})}\\
I^{n+1} \ar[r]\ar@{.>}[ru] & X\times X.
 }
 $$ 
where $ev_{0}$ and $ev_{r}$ are evaluation maps at 0 and $r$ respectively. But the existence of a lift is obvious, since $\mathcal{M}^{r}(X)$ is just the rescaling of $X^{I}$ by $r$.
\end{proof}
\begin{remark}
The diagonal map $\Delta: ~X\rightarrow X\times X $ is factored by $X\rightarrow\mathcal{M}(X)\rightarrow X\times X $, where the first map (weak equivalence) is the evident embedding of $X$ in $ \mathcal{M}(X)$ as constant paths of length 0. The second map is the fibration $(\delta_{-},\delta_{+})$.
\end{remark}
\begin{definition}
Let $(\alpha, r)$ and $(\beta, s)$ two elements of $\mathcal{M}(X)$ such that 
$\alpha(r)=\beta(0)$, then we can define a strict composition $(\beta\ast\alpha, r+s) $ by concatenation. 
\end{definition}
\begin{remark}
The law composition $\ast$ is strictly associative and unital when it is defined. 
\end{remark}

\subsection{Categorical Path Object}\label{section2}
In order to construct a functorial path object in the category  $\Cat_{\Top}$  we took our inspiration from  \cite{drinfeld2004dg} and \cite{tabuada2007new}. For the model structure on the category of topological categories we refer to \cite{Amrani1}.

 \begin{definition}\label{pathobject}
Let $\Path: \Cat_{\Top}\rightarrow \Cat_{\Top}$ be defined as follow:
Given a topological category $\M$, we declare:
\begin{enumerate}
\item Objects of $\Path\C$ are morphisms $\xymatrix{a: x\ar[r]^-{\sim}& y}$ such that $a$ is a weak homotopy equivalence in $\C$. 
\item The topological set of maps $\Map_{\Path\C}(x\rightarrow y,x^{'}\rightarrow y^{'})$ between two objects $a:x\rightarrow y$ and $b:x^{'}\rightarrow y^{'}$  is given by the pullback of the diagram in $\Top$. 

    $$
  \xymatrix{
  \Map_{\Path\C}(x\rightarrow y,x^{'}\rightarrow y^{'})\ar[d]_-{(ev_{s},ev_{t})}\ar[r]  &   \mathcal{M}(\Map_{\C}(x,y^{'}))\ar[d]^{(\delta_{-},~\delta_{+})} \\
  \Map_{\C}(x,x^{'})\times \Map_{\C}(y,y^{'})  \ar[r]_{a^{\ast},b_{\ast}}&  \Map_{\C}(x,y^{'})\times \Map_{\C}(x,y^{'})
  }
  $$
  where $\mathcal{M}(\Map_{\C}(x,y^{'}))$ is the Moore path space associated to $\Map_{\C}(x,y^{'})$ \ref{moorepathsapce}.
  \end{enumerate}
\end{definition}
\begin{remark}
 To understand maps of the category of $\Path\C$ we should say that $$\Map_{\Path\C}(x\rightarrow y,x^{'}\rightarrow y^{'})$$ is a model for the homotopy pullback in $\Top$ of the diagram 
 
  $$
  \xymatrix{
    &  \Map_{\C}(y,y^{'})\ar[d]^{a^{\ast}} \\
  \Map_{\C}(x,x^{'}) \ar[r]_{b_{\ast}}&  \Map_{\C}(x,y^{'}).
  }
  $$
\end{remark}


  \begin{remark}
 The diagonal map $\Delta:\C\rightarrow \C\times\C$ is factored as 
 $$
  \xymatrix{
   \C\ar[rr]^{\Delta}\ar[rd]^{i} & & \C\times\C \\
  &\Path\C . \ar[ru]_-{(ev_{s},ev_{t})}& 
  }
  $$
  Where
  \begin{itemize}
  \item  $i:\C\rightarrow\Path\C$ is defined as $i(x)= id:~x\rightarrow x$.
  \item  $ev_{s}(x\rightarrow y)=x$ and  $ev_{t}(x\rightarrow y)=y$.
  \end{itemize}
 \end{remark}
 \begin{lemma}\label{lemma1}
 The map $i:\C\rightarrow\Path\C$ is a Dwyer-Kan weak equivalence.
 \end{lemma}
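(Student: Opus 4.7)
The plan is to verify the two defining conditions of a Dwyer--Kan equivalence for $i:\C\to\Path\C$: first, that $i$ induces a weak homotopy equivalence on every mapping space, and second, that the induced functor on homotopy categories is essentially surjective.

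For the mapping-space condition, I would specialise the pullback in Definition~\ref{pathobject} to the pair $(i(x),i(y))$. Because both structure maps are identities, the lower leg $\Map_\C(x,y)\times\Map_\C(x,y)\to\Map_\C(x,y)^2$ becomes $(f,g)\mapsto(id_y\circ f,\,g\circ id_x)=(f,g)$, so the pullback collapses to $\mathcal{M}(\Map_\C(x,y))$ itself. The induced map $i_{x,y}$ sends $f$ to the constant Moore path at $f$ of length $0$; this is a section of the retraction $\delta_-:\mathcal{M}(\Map_\C(x,y))\to\Map_\C(x,y)$ and is therefore a weak homotopy equivalence.

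For essential surjectivity, I would fix an object $a:x\to y$ of $\Path\C$. By construction $a$ is a weak equivalence in $\C$, so it admits a homotopy inverse $\bar a:y\to x$ together with Moore homotopies $h:id_x\simeq\bar a a$ in $\Map_\C(x,x)$ and $k:a\bar a\simeq id_y$ in $\Map_\C(y,y)$. Set $\phi=(id_x,a,\mathrm{const}_a):i(x)\to a$ and $\psi=(id_x,\bar a,h):a\to i(x)$. Using the componentwise composition in $\Path\C$, with twisted concatenation on the Moore-path part, the composites work out to $\psi\phi=(id_x,\bar a a,h)$ and $\phi\psi=(id_x,a\bar a,a_\ast h)$. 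Inside $\Map_{\Path\C}(i(x),i(x))=\mathcal{M}(\Map_\C(x,x))$ the first composite is the Moore path $h$, which is connected to $id_{i(x)}=\mathrm{const}_{id_x}$ via the canonical retraction of the Moore path space; the second composite is linked to $id_a$ inside the more complicated pullback $\Map_{\Path\C}(a,a)$ by coupling the homotopy $k$ with a parallel deformation of the attached Moore path. Hence $\phi$ descends to an isomorphism in $\Ho(\Path\C)$ and $a$ lies in the essential image of $i$.

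The main obstacle is this last verification, namely that $\phi\psi$ lies in the path component of $id_a$ inside the pullback $\Map_{\Path\C}(a,a)$: the target component must be deformed from $a\bar a$ to $id_y$ along $k$ while the attached Moore path is simultaneously contracted, and this has to be performed coherently in the pullback. Once this routine but fiddly check is carried out, the two conditions combine to yield that $i:\C\to\Path\C$ is a Dwyer--Kan equivalence.
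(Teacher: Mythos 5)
Your proof follows the same two-step strategy as the paper: identify $\Map_{\Path\C}(i(x),i(y))$ with the Moore path space $\mathcal{M}(\Map_{\C}(x,y))$ (so that $i$ is homotopically fully faithful, being a section of the deformation retraction onto constant paths), and then exhibit the comparison square $\phi=(id_x,a,\mathrm{const}_a):i(x)\to a$ for essential surjectivity. The first half matches the paper exactly. For the second half the paper is far terser: it simply displays the square and asserts it is a weak equivalence in $\Path\C$, with no verification, whereas you try to certify invertibility in $\pi_0\Path\C$ by producing $\psi$ and checking both composites; your computations of $\psi\phi=(id_x,\bar a a,h)$ and $\phi\psi=(id_x,a\bar a,a_\ast h)$ against the concatenation law of Section~\ref{composition} are correct, and the contraction of $\psi\phi$ to $id_{i(x)}$ via the length-shrinking retraction of $\mathcal{M}$ is fine. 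The one step you leave open is genuinely the crux, and it is slightly less routine than you suggest: deforming $(id_x,a\bar a,a_\ast h)$ to $(id_x,id_y,\mathrm{const}_a)$ inside the pullback $\Map_{\Path\C}(a,a)$ by sliding the target component along $k$ reduces, after the endpoints return to $a$, to showing that the loop $a^{\ast}k\ast a_{\ast}h$ is null-homotopic in $\Map_{\C}(x,y)$; for arbitrary choices of $h$ and $k$ this can fail, so one must first improve the homotopy inverse data so that a triangle identity holds (always possible for a homotopy equivalence), or else invoke the general fact that a square whose two vertical components are weak equivalences in $\C$ becomes invertible in $\pi_0\Path\C$. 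With that point addressed your argument is complete and in fact supplies exactly the justification the paper omits.
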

 \begin{proof}
 First, let us check that $i$ is homotopicaly fully faithful i.e.,
 $$\Map_{\C}(x,y)\rightarrow\Map_{\Path\C}(i(x),i(y))$$
 is a weak equivalence of topological space, since by definition 
 $$ \Map_{\Path\C}(i(x),i(y))=\mathcal{M}(\Map_{\C}(x,y)).$$
 For the homotopical essential surjectivity, lets take an object $a: x\rightarrow y$ 
 but, by definition, there is a weak equivalence in $\Path\C$ between $i(a)$ and 
 $a:x\rightarrow y$ given by
 $$
 \xymatrix{
 x\ar[d]^{id}\ar[r]^{id} & x\ar[d]^{\sim}_{a}\\
 x \ar[r]_{\sim}^{a} & y.
 }
 $$
so $i$ is an equivalence of topological categories.
 
 \end{proof}
\begin{lemma}\label{lemma2}
The map $(ev_{s},ev_{t}):\Path\C\rightarrow \C$ is a fibration in $\Cat_{\Top}$
\end{lemma}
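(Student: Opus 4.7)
The plan is to verify the two defining conditions for a fibration in $\Cat_{\Top}$ (in the Bergner-type model structure of \cite{Amrani1}): (F1) the induced map on each mapping space is a Serre fibration in $\Top$, and (F2) every equivalence in $\C \times \C$ whose target is in the image of $(ev_s,ev_t)$ lifts to an equivalence in $\Path\C$.

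For (F1), fix objects $a\colon x\to y$ and $b\colon x'\to y'$ in $\Path\C$. By Definition \ref{pathobject} the mapping space $\Map_{\Path\C}(a,b)$ is the pullback of $\mathcal{M}(\Map_\C(x,y')) \to \Map_\C(x,y')\times\Map_\C(x,y')$ along $(a^*,b_*)$. The previous lemma asserts exactly that this right-hand map is a fibration in $\Top$, and fibrations are stable under pullback, so the projection $\Map_{\Path\C}(a,b) \to \Map_\C(x,x')\times\Map_\C(y,y')$ is a fibration. This projection is, tautologically, the value of $(ev_s,ev_t)$ on mapping spaces.

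For (F2), let $a\colon x\to y$ be an object of $\Path\C$ and suppose we are given weak equivalences $\phi\colon x_0\xrightarrow{\sim} x$ and $\psi\colon y_0\xrightarrow{\sim}y$ in $\C$. Choose homotopy inverses $\bar\phi$ and $\bar\psi$, and set $a' := \bar\psi\circ a\circ \phi\colon x_0\to y_0$. The class $[a'] = [\bar\psi]\circ[a]\circ[\phi]$ in $\pi_0\C$ is a composite of isomorphisms, so $a'$ is a weak equivalence in $\C$ and therefore an object of $\Path\C$. A morphism $a'\to a$ in $\Path\C$ over $(\phi,\psi)$ amounts to a Moore path in $\Map_\C(x_0,y)$ from $a\phi$ to $\psi a' = \psi\bar\psi a\phi$, which exists because the homotopy $\psi\bar\psi\sim \id_y$ in $\Map_\C(y,y)$ is transported to one in $\Map_\C(x_0,y)$ by precomposing with $a\phi$ and then reparameterized to a Moore path of appropriate length.

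To finish, one shows this lift is an equivalence in $\Path\C$, by exhibiting an explicit homotopy inverse $a\to a'$ built from $(\bar\phi,\bar\psi)$ together with analogously constructed Moore paths, and verifying that the two composites become identities in $\pi_0\Path\C$. The main obstacle here is precisely this last verification, since composition in $\Path\C$ intertwines composition in $\C$ with strict concatenation of Moore paths; however, strict associativity and unitality of Moore-path concatenation reduce the check to the homotopy-inverse identities $\phi\bar\phi\sim\id$, $\bar\phi\phi\sim\id$, $\psi\bar\psi\sim\id$, $\bar\psi\psi\sim\id$ already available in $\C$, together with a standard juggling of reparametrisations. Alternatively, one may observe that on $\pi_0\Path\C$ the functor $(ev_s,ev_t)$ reflects isomorphisms between objects of the form $(a,a')$ once $\phi$ and $\psi$ are invertible, which gives the desired conclusion with less bookkeeping.
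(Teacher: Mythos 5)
Your proof is correct and follows essentially the same route as the paper: part (F1) identifies the map on mapping spaces as a pullback of the Moore-path fibration from the preceding lemma, and part (F2) lifts an equivalence $(\phi,\psi)$ landing on $a$ by forming $\bar\psi\, a\,\phi$ and using the homotopy $\psi\bar\psi\sim \mathrm{id}$ transported into $\Map_{\C}(x_0,y)$, exactly as in the paper's construction of $(w_1,w_2,\gamma)$. Your closing paragraph, checking that the lifted morphism is actually an equivalence in $\Path\C$, addresses a verification the paper's proof asserts without detail, so it is a welcome addition rather than a divergence.
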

\begin{proof}
First, we have to prove that 
$$(ev_{s}, ev_{t}):\Map_{\Path\C}(x\rightarrow y, x^{'}\rightarrow y^{'})\rightarrow \Map_{\C}(x,x^{'})\times \Map_{\M}(y,y^{'})$$ 
is a fibration of topological spaces. By definition $(ev_{s}, ev_{t})$ is a pullback of the fibration 
$$(\delta_{-},\delta_{+}):  \mathcal{M}(\Map_{\C}(x,y^{'}))\rightarrow \Map_{\C}(x,y^{'})\times \Map_{\C}(x,y^{'}),$$ and $\Top$ is right proper.   

Second, lets $w_{1},w_{2}:(x,y)\rightarrow (x^{'},y^{'})$ be a weak equivalence in $\C\times\C$ such
 that there exist an object $a:x^{'}\rightarrow y^{'}$ in $\Path\C$. Choose a homotopical inverse to $w_{2}$  
 denoted by $w_{2}^{-1} $ i.e., we have a path between $id$ and $w_{2}w_{2}^{-1}$, and define the map $w_{2}^{-1} a w_{1}: x\rightarrow y$ which is also a 
 weak homotopy equivalence. 
  $$
 \xymatrix{
 x\ar[d]^{\sim}_{w_{1}}\ar[r]^{w_{2}^{-1} a w_{1}} & y\ar[d]_{\sim}^{w_{2}}\\
 x^{'} \ar[r]^{\sim}_{a} & y^{'}.
 }
 $$
 
 We remark, by definition, that we have a path 
 $$\gamma: [0,1]\rightarrow \Map_{\C}(x,y^{'})$$
  between $w_{2} w_{2}^{-1} a w_{1} $ and $a w_{1}$. 
  So we can lift $(w_{1},w_{2})$ to a weak equivalence in $\Path\C$ given by $(w_{1},w_{2}, \gamma).$
\end{proof}
\begin{lemma}\label{lemma3}
The categorical path object defined in \ref{pathobject} is a path object in the model category $\Cat_{\Top}$ constructed in \cite{Amrani1}  
\end{lemma}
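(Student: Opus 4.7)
The plan is to assemble the previous two lemmas into the required statement. A path object for $\C$ in a model category consists of a factorization of the diagonal $\Delta: \C \to \C \times \C$ as a weak equivalence followed by a fibration, $\C \xrightarrow{\sim} \Path\C \twoheadrightarrow \C \times \C$. The remark preceding Lemma \ref{lemma1} already exhibits exactly such a factorization $\Delta = (ev_s, ev_t) \circ i$, since $i(x) = id_x$ lands in $\Path\C$ and the evaluation pair returns $(x,x)$. So the content of the lemma is essentially bookkeeping: check that the two pieces of this factorization are of the correct homotopical type in the model structure of \cite{Amrani1}.

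First I would invoke Lemma \ref{lemma1} to conclude that $i: \C \to \Path\C$ is a Dwyer--Kan equivalence, which is by construction the class of weak equivalences in the model structure of $\Cat_{\Top}$ used here. Then I would appeal to Lemma \ref{lemma2} to conclude that $(ev_s, ev_t): \Path\C \to \C \times \C$ is a fibration in $\Cat_{\Top}$. Chaining the two gives the required path object factorization, and the lemma follows.

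The only point needing attention is that fibrations in $\Cat_{\Top}$ are defined by two conditions --- a Serre fibration on each mapping space, and an isofibration-type lifting at the level of homotopy categories --- and one must confirm that the two halves of the proof of Lemma \ref{lemma2} correspond precisely to these two requirements. This is the step I expect to require the most care, but the proof of Lemma \ref{lemma2} was already organized to treat them in turn, so there is nothing substantial left to check. With both conditions verified, $\Path\C$ is by definition a path object for $\C$ in $\Cat_{\Top}$.
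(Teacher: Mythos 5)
Your proposal matches the paper's own proof, which simply cites Lemmas \ref{lemma1} and \ref{lemma2} together with the definition of the model structure in the reference; you have merely spelled out the same bookkeeping in more detail. Nothing further is needed.
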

\begin{proof}
It is a direct consequence of \cite{Amrani1}, \ref{lemma1} and \ref{lemma2}.
\end{proof}



\subsection{composition law in a categorical path object}\label{composition}
Now, we define the composition law in the categorical path object introduced in \ref{pathobject}.
Let $\C$ be a topological category. The categorical path object $\mathcal{P}(\C)$
has as objects $a:x\rightarrow y$ where $a$ is a weak equivalence in $\C$.
  A morphism in $\mathcal{P}(\C)$ between  $a: x\rightarrow y$ and $b: x^{'}\rightarrow y^{'}$ is a diagram of the form 
  $$ \xymatrix{ 
    x \ar[r]^{a} \ar[d]_{f}  & y \ar[d]^{g} \ar@{=>}[ld]_\alpha\\
    x^{'} \ar[r]^{b} & y^{'} 
    }$$
  where $f,~g$ are elements of $\Map_{\C}(x,x^{'})$ and $\Map_{\C}(y,y^{'})$, and $\alpha_{r}$ an element of 
   $ \mathcal{M}(\Map_{\C}(x,y^{'}))$ such that $\alpha$ is a homotopy of length $r$ between  $ g.a$ and $b.f$.
 We check the composition law. Let $a: x\rightarrow y$,  $b: x^{'}\rightarrow y^{'}$ and  $c: x^{''}\rightarrow y^{''}$ tree objects of $\mathcal{P}(\C)$. Consider the diagram 
$$ \xymatrix{ 
    x \ar[r]^{a} \ar[d]_{f} & y \ar[d]^{g} \ar@{=>}[ld]_{\alpha_{r}}\\
    x^{'} \ar[r]^{b}\ar[d]_{f^{'}} & y^{'}\ar[d]^{g^{'}} \ar@{=>}[ld]_{\beta_{s}} \\
    x^{''} \ar[r] ^{c}  & y^{''} 
    }$$
   by composition we obtain 
   $$ \xymatrix{ 
    x \ar[rr]^{a} \ar[dd]_{f^{'}f}&& y \ar[dd]^{g^{'}g} \ar@{=>}[lldd]_{\alpha\circ\beta_{r+s}}  \\
    & & \\
    x^{''} \ar[rr]^{b} && y^{''} 
    }$$
Where $\beta\circ\alpha_{r+s}\in  \mathcal{M}(\Map_{\C}(x,y^{''}))$ is defined as the concatenation
$\beta .f \ast g^{'}.\alpha$. The identity map $1$ of $a: x\rightarrow y$ is given by $id:x\rightarrow x$ , $id:y\rightarrow y$ and a constant Moore path of length 0. 
\begin{lemma}
The previous composition law is associative. 
\end{lemma}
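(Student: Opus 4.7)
The plan is to unwind both parenthesizations of a triple composite and check that the resulting three pieces of data (the two arrows in $\C$ and the Moore path in the appropriate mapping space) coincide on the nose. Fix three composable morphisms in $\mathcal{P}(\C)$, say $(f,g,\alpha_r): a\to b$, $(f',g',\beta_s): b\to c$, and $(f'',g'',\gamma_t): c\to d$, where $a: x\to y$, $b: x'\to y'$, $c: x''\to y''$, $d: x'''\to y'''$ and $\alpha_r$, $\beta_s$, $\gamma_t$ are Moore paths of lengths $r,s,t$ in the appropriate mapping spaces.

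The first step is to compute the two iterated composites explicitly using the formula in the text. For the left-associated composite, one gets the pair of arrows $(f''f'f,\, g''g'g)$ together with the Moore path $\gamma\cdot (f'f) \,\ast\, g''\cdot\bigl(\beta\cdot f \,\ast\, g'\cdot\alpha\bigr)$ of length $r+s+t$ in $\Map_\C(x,y''')$. For the right-associated composite, one gets the same pair of arrows $(f''f'f,\, g''g'g)$, together with the Moore path $\bigl(\gamma\cdot f' \,\ast\, g''\cdot\beta\bigr)\cdot f \,\ast\, (g''g')\cdot\alpha$, again of length $r+s+t$. The arrow components agree by the associativity of composition in the topological category $\C$.

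The remaining content is to show that the two Moore paths agree. Here I would use three elementary facts. First, for a fixed arrow $h$ in $\C$, pre- and post-composition $(-)\cdot h$ and $h\cdot(-)$ are continuous maps between mapping spaces which carry concatenation of Moore paths to concatenation of Moore paths; in particular $(\beta\cdot f \ast g'\cdot\alpha)$ pre-composed with nothing and post-composed with $g''$ splits as $g''\cdot(\beta\cdot f) \,\ast\, g''\cdot(g'\cdot\alpha)$, and symmetrically on the other side. Second, whiskering is compatible with composition of arrows: $g''\cdot(g'\cdot\alpha) = (g''g')\cdot\alpha$ and $(\gamma\cdot f')\cdot f = \gamma\cdot(f'f)$, and likewise $g''\cdot(\beta\cdot f) = (g''\cdot\beta)\cdot f$. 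Third, the Moore concatenation $\ast$ is strictly associative, as recalled in the remark following Definition~\ref{moorepathsapce}.

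Combining these, both expressions simplify to the common Moore path
\[
\gamma\cdot(f'f) \,\ast\, (g''\cdot\beta)\cdot f \,\ast\, (g''g')\cdot\alpha,
\]
proving associativity. The only potential subtlety is the compatibility of whiskering with concatenation, but this is immediate from the fact that whiskering is induced by pointwise post/pre-composition, which preserves the piecewise definition of $\ast$. Hence no routine calculation beyond bookkeeping is needed, and the main obstacle is simply organizing the notation so the two bracketings can be compared term by term.
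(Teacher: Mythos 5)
Your proof is correct and follows essentially the same route as the paper: both unwind the two bracketings via the formula $\beta\circ\alpha=\beta\cdot f \ast g'\cdot\alpha$ and reduce them to the common expression $\gamma\cdot(f'f)\ast g''\cdot\beta\cdot f\ast(g''g')\cdot\alpha$ using strict associativity of Moore concatenation and compatibility of whiskering with $\ast$. You are merely more explicit than the paper in isolating the three elementary facts that justify each step.
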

 \begin{proof}
 Let take the following diagram:
 
 $$ \xymatrix{ 
    x \ar[r]^{a} \ar[d]_{f}  & y \ar[d]^{g} \ar@{=>}[ld]_{\alpha_{r}}\\
    x^{'} \ar[r]^{b}\ar[d]_{f^{'}} & y^{'}\ar[d]^{g^{'}}  \ar@{=>}[ld]_{\beta_{s}}\\
    x^{''} \ar[r] ^{c}\ar[d]_{f^{''}} & y^{''}\ar[d]^{g^{''}}\ar@{=>}[ld]_{\gamma_{t}}  \\
    x^{'''} \ar[r] ^{d}  & y^{'''} 
    }$$
  The composition of morphisms and Moore paths is strict and we get:
  \begin{eqnarray}
  (\gamma\circ\beta)\circ \alpha &=& (\gamma f^{'}f\ast g^{''}.\beta)\circ \alpha\\
                                                &=& (\gamma . f^{'}\ast g^{''}.\beta).f \ast g^{''}g^{'}.\alpha\\
                                                &=& \gamma . f^{'}f\ast g^{''}.\beta .f \ast g^{''}g^{'}.\alpha
                                                \end{eqnarray}
                                                
  \begin{eqnarray}
  \gamma\circ(\beta\circ \alpha) &=& \gamma \circ (\beta . f\ast g^{'}.\alpha)\\
                                                &=& \gamma . f^{'}f \ast  g^{''}. (\beta . f\ast g^{'}. \alpha)\\
                                                &=& \gamma . f^{'}f\ast g^{''}. \beta . f \ast g^{''}g^{'}. \alpha
                                                \end{eqnarray}

 \end{proof}
                                               
 \begin{corollary}
 The category $\Cat_{\Top}$ is a cofibrantly generated model category and Quillen equivalent to the model category $\Cat_{\sSet}$.
\end{corollary}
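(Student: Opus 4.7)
The plan is to deduce both claims at once by transferring Bergner's cofibrantly generated model structure on $\Cat_{\sSet}$ along the adjunction
\[
|-|\colon \Cat_{\sSet} \rightleftarrows \Cat_{\Top} \colon \sing,
\]
where $|-|$ and $\sing$ are applied hom-object-wise (this makes sense because the geometric realization functor $|-|\colon\sSet\to\Top$ is strong symmetric monoidal and thus preserves enrichments). I would take as candidate generating (trivial) cofibrations the images under $|-|$ of Bergner's sets $I$ and $J$, and define the weak equivalences and fibrations in $\Cat_{\Top}$ to be the classes created by $\sing$.

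Next I would check the hypotheses of Kan's transfer theorem, which decompose into (i) smallness of the domains of $|I|$ and $|J|$ in $\Cat_{\Top}$, which is routine, and (ii) the statement that every relative $|J|$-cell complex is a weak equivalence. Condition (ii) is precisely where the categorical path object of Section \ref{moorepath} enters. By Lemma \ref{lemma3}, the factorization $\C \to \Path\C \to \C\times\C$ is a functorial path-object factorization in $\Cat_{\Top}$, and every object of $\Cat_{\Top}$ is fibrant since all topological spaces are. These are exactly the inputs of the standard path-object argument: for each $i\in|J|$ one builds a homotopy retraction through $\Path$ and concludes that pushouts along $i$, and hence transfinite compositions of such, are Dwyer--Kan equivalences. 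This yields cofibrant generation, and a direct comparison of the two classes of weak equivalences and fibrations identifies the transferred model structure with the one constructed in \cite{Amrani1}.

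For the Quillen equivalence, I would use that all objects of $\Cat_{\Top}$ are fibrant, so the derived unit and counit are computed without extra replacements. On a cofibrant $\C\in\Cat_{\sSet}$, the unit $\C\to\sing|\C|$ is the identity on objects and on each mapping space is the classical weak equivalence $X\to\sing|X|$, hence a Dwyer--Kan equivalence. Symmetrically, for every $\X\in\Cat_{\Top}$ the counit $|\sing\X|\to\X$ is a Dwyer--Kan equivalence. This suffices to conclude that $|-|\dashv\sing$ is a Quillen equivalence between $\Cat_{\sSet}$ and $\Cat_{\Top}$.

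The main obstacle is the verification in the second paragraph, namely carrying out the path-object argument along the transfinite filtration of a relative $|J|$-cell complex. One has to transport a putative homotopy-retraction through successive pushouts at the hom-object level, and the non-cartesianness of $\Path$ noted in the introduction rules out any shortcut via multiplicativity of the path object. What makes the argument still go through is that pushouts of cell attachments in $\Cat_{\Top}$ are controlled hom-wise on a fixed object set, so the required lift reduces to a computation inside $\Top$ involving the Moore path space, where it follows from the fact that $(\delta_{-},\delta_{+})\colon\mathcal{M}(X)\to X\times X$ is a fibration.
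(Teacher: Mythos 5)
Your proposal is correct and follows essentially the same route as the paper: transfer Bergner's cofibrantly generated model structure on $\Cat_{\sSet}$ along the hom-object-wise realization/singular adjunction, with the transfer condition supplied by the categorical path object of Lemma \ref{lemma3} together with fibrancy of all objects --- this is exactly the paper's appeal to the path-object form of the transfer theorem in [Berger--Moerdijk, 2.6]. The only difference is that you spell out the path-object argument and the unit/counit check for the Quillen equivalence, which the paper leaves implicit.
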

We propose a new and very easy proof comparing with \cite{Amrani1}.
\begin{proof}
The standard Quillen adjunction between $\sSet$ and compactly generated spaces $\Top$ induces an adjunction between $\Cat_{\sSet}$ and $\Cat_{\Top}$. The existence of an induced model structure is a direct consequence of the existence of path object \ref{lemma3}, \cite{bergner} and [\cite{berger2003axiomatic}, 2.6]. The Quillen equivalence is an obvious fact. 

\end{proof}
\subsection{Algebraic structure on $\Path\C$ }
The categorical path object is not cartesian since the Moore path objet does not commute with the cartesian product. Let $(\C,\otimes)$ be a monoidal topological category, then $\otimes$ extends obviously to a monoidal structure on objects of $\Path(\C)$. We denote by $\odot$ the extension of $\otimes$ by the formula 
$$(x\rightarrow y)\odot (x^{'}\rightarrow y^{'}):=x\otimes x^{'}\rightarrow y\otimes y^{'}.$$ 
The naive extension $$-\odot -: \mathcal{M}(\map_{\C}(x,y'))\times \mathcal{M}(\map_{\C}(w,z')) \rightarrow  \mathcal{M}(\map_{\C}(x\otimes w,y^{'}\otimes z^{'}) )$$
  which sends any Moore path $\alpha_{r}$ of length $r$ and any Moore path $\beta_{s}$ of length $s$ to the Moore
  path $(\beta\odot \alpha)(t):=\beta(t)\otimes \alpha(t)$ of length $r+s$  does \textit{not} extend naturally to a bifunctor $-\odot -: \mathcal{P}(\C)\times\mathcal{P}(\C)\rightarrow\mathcal{P}(\C)$, since it does not verify strictly (but only up to homotopy) the following equality:
  $$(\beta_{s}\odot\beta^{'}_{z})\circ(\alpha_{r}\odot\alpha^{'}_{v}) = (\beta_{s}\circ\alpha_{r})\odot(\beta^{'}_{z}\circ\alpha^{'}_{v}).$$
  Actually, this equality holds only in the case where $r=s$ and $z=v$.


\section{Eventual model structure on $T$-algebras.}\label{eventual}
This section si purely conjectural. We should first mention that 2-subcategory $\Cat_{\Top_{\Delta}}$ of the 2-category $\Cat_{\Top}$ is a combinatorial model category with the same weak equivalences, fibrations and generating (trivial) cofibration (same proof as in \cite{Amrani1}). If $T$ is a 2-operad on  $\Cat_{\Top_{\Delta}}$, then the category of $T$-algebra $\Cat_{\Top_{\Delta}}^{T}$ is (co)complete and we have an adjunction 
 
$$ 
 \xymatrix{\Cat_{\Top_{\Delta}}\ar@<2pt>[r]^{ F} & \Cat_{\Top_{\Delta}}^{T}  \ar@<2pt>[l]^{U}}$$

 Suppose that there is a functorial path $\mathcal{P}$ object on $\Cat_{\Top_{\Delta}}$, which commutes with the cartesian product, such that the factorization of the diagonal $\C\rightarrow\mathcal{P}(\C)\rightarrow \C\times\C$ is cartesian, then the category  $\Cat_{\Top_{\Delta}}^{T}$ is a cofibrantly generated model category with the underlying model structure, i.e., $f: A\rightarrow B$ is a weak equivalence (fibration) in  
 $\Cat_{\Top_{\Delta}}^{T}$ if and only if the map $Uf$ is a weak equivalence (fibration) in  $\Cat_{\Top_{\Delta}}$. The proof is an easy consequence of [\cite{berger2003axiomatic}, 2.6]. 
 
 \begin{remark}
 The existence of a cartesian categorical path objet is a sufficient but not a necessary condition. 
 \end{remark}
 
An other possible way to define a cartesian categorical path objet will be to find a functorial path object $\mathcal{C}$ in the category of topological spaces satisfying the following two conditions:
\begin{enumerate}
\item $\mathcal{C} (X\times Y)\simeq \mathcal{C} (X)\times \mathcal{C} (Y)$ i.e., $\mathcal{C}$ is a cartesian functor.
\item The functorial concatenation map (when it is defined) $\mathcal{C} (X)\times_{X}\mathcal{C} (X)\rightarrow \mathcal{C} (X)$ is strictly associative with unit.
\end{enumerate}
Consider the categorical path object $\Path^{'}$ defined as $\Path$ \ref{pathobject}, where we replace the Moore path object $\mathcal{M}$ by $\mathcal{C}$. It follows that $\Path^{'}$ is a categorical cartesian path object.
We should remark that such path object $\mathcal{C}$ in the category of topological spaces does not exist. This remark is due to J. Lurie. Roughly speaking, suppose that $G$ is a topological group. Then $\mathrm{\Omega} (G)$  (subspace of $\mathcal{C} (G)$) of loops based on the unit  would have an associative multiplication coming from the functorial concatenation, and an other associative multiplication coming from the group structure on G. By functoriality these would be compatible, so the Eckmann-Hilton argument would imply that both multiplications were the same and that they were commutative.
But the multiplication on the loop space of a topological group G, usually cannot be made strictly commutative.\\
\subsection{Classical operads vs 2-operads}
 At this stage of our article, we are tempted to ask the following question: \textsf{What is the relation between the classical operads (defined on simplicial sets) and the notion of 2-operads \ref{2-operad}} ? First of all, let us clarify the question by an example. Roughly speaking an $\infty$-groupoids $\C$ is a topological category such that its homotopy category $\pi_{0}\C$ is an ordinary groupoid i.e., a category where all morphisms are isomorphisms. Suppose that there is an induced  model structure on the category of symmetric monoidal topological categories  $\Cat_{\Top}^{S}$, ($S$ is the 2-operad discribed in \ref{exmonads}) then there is a restricted model structure on the category $\infty-\mathbf{Grp}^{S}$ of symmetric monoidal $\infty$-groupoids . The direct consequence is that we have a right Quillen functor 
 $$\N:  \infty-\mathbf{Grp}^{S}\longrightarrow \mathcal{E}_{\infty}-spaces$$
 where $\N$ is the coherent nerve. 
 In this example, it will be interesting to understand the properties of the derived right Quillen functor 
 $$\mathrm{R}\N:\Ho(\infty-\mathbf{Grp}^{S})\longrightarrow \Ho(\mathcal{E}_{\infty}-spaces).$$

 \subsection{$\mathcal{A}_{\infty}$-Categories}
We think that one of the problems to find a \textit{good} categorical path object, in the model category $\Cat_{\Top}$, is due to the fact that the category of topological monoids is too rigid. But we are not claiming that an induced model structure on  $\Cat_{\Top}^{T}$, for a 2-operad $T$, does not exist. The idea is to consider the 2-category of  $\mathcal{A}_{\infty}$-categories. There is actually many definitions of the notion of  $\mathcal{A}_{\infty}$-category. One definition is developed in the context of differential graded algebras and dg-categories, by M. Konstevich.
Our interest is concentrated on the case of topological (simplicial) enrichment. In \cite{blumberg2008topological}, the authors prove that there is a monoidal symmetric closed model structure on the category of topological spaces (more precisely $\ast$-modules) denoted by $(\mathbf{U},\boxtimes, \ast)$ such that the (commutative) monoids are exactly ($\mathcal{E}_{\infty}$-spaces) $\mathcal{A}_{\infty}$-spaces. The idea is to consider the 2-category  $\Cat_{\mathbf{U}}$ of enriched categories over $ \mathbf{U}$ as a model for $\mathcal{A}_{\infty}$-categories.
A recent work \cite{bergermoerdijk}  generalize the Dwyer-Kan model structure on the category of enriched categories. We think that C. Berger and I. Moerdijk ideas could be crucial and more natural in our understanding of homotopy theory of the category  $\Cat_{\mathbf{U}}^{T}$ of $T$-algebras.

\bibliographystyle{plain} 
\bibliography{infinity}

\end{document}